\title{From Parking Functions to Gelfand Pairs}
\date{\today}
\newtheorem{Theorem}[equation]{Theorem}
\newtheorem{Corollary}[equation]{Corollary}
\newtheorem{Lemma}[equation]{Lemma}
\newtheorem{Proposition}[equation]{Proposition}
\theoremstyle{definition}
\newtheorem{Example}[equation]{Example}
\newtheorem{Conjecture}[equation]{Conjecture}
\theoremstyle{remark}
\newtheorem{Remark}[equation]{Remark}
\newtheorem{Fact}[equation]{Fact}
\numberwithin{equation}{section}
\numberwithin{figure}{section}
\newcommand{\C}{{\mathbb C}}
\newcommand{\Z}{{\mathbb Z}}
\newcommand{\N}{{\mathbb N}}
\newcommand{\F}{{\mathbb F}}
\newcommand{\Ind}[3]{\text{Ind}^{#1}_{#2}(#3)}
\def\k{{\mathbf k}}
\newcommand{\Mk}{\C M (\k)}
\DeclareMathOperator{\id}{{\mathbf 1}}
\newcommand{\vG}{\varGamma}
\newcommand{\wvG}[1]{\widetilde{ \vG^{#1}}}
\newcommand{\mf}[1]{\mathbf{#1}}
\def\Diag{\Delta}
\def\L{{\mathcal L}}
\def\inv{^{-1}}
\def\GAP{{\sc GAP}}
\DeclareMathOperator{\Orb}{Orb}
\DeclareMathOperator{\pf}{PF}
\DeclareMathOperator{\homself}{End}
\begin{document}

\author{K\"ur\c{s}at Aker, \\ Mah\.ir B\.ilen Can}

 \address{K\"ur\c{s}at Aker: Feza G\"ursey Institute, \.Istanbul.}
 \email{aker@gursey.gov.tr}

 \address{Mahir Bilen Can: Tulane University, New Orleans.}
\email{mcan@tulane.edu}


\maketitle

\setcounter{tocdepth}{3}

	\begin{abstract}
	A pair $(G,K)$ of a group and its subgroup is called a Gelfand pair
	if the induced trivial representation of $K$ on $G$ is multiplicity free.
	Let $(a_j)$ be a sequence of positive integers of length $n$, and 
	let $(b_i)$ be its non-decreasing rearrangement. The sequence $(a_i)$ 
	is called a parking function of length $n$ if $b_i \leq i$ for all $i=1,\dots,n$.
	In this paper we study certain Gelfand pairs in relation with parking functions.
	In particular, we find explicit descriptions of the decomposition of the 
	associated induced trivial representations into irreducibles. 
	We obtain and study a new $q$ analogue of the Catalan numbers 
	$\frac{1}{n+1}{ 2n \choose n }$, $n\geq 1$. 
	\end{abstract}

	\section{\textbf{Introduction}}
	\label{sec-0}

	For a given natural number $n$ and 
	a sequence of positive integers $(a_j)$, let $(b_i)$ denote
	non-decreasing rearrangement of $(a_j)$. Then, the sequence $(a_i)$ 
	is called a \emph{parking function} of length $n$ if $b_i \leq i$.
	
	Parking functions, first introduced in a paper of Pyke \cite{Pyke}, Konheim and Weiss \cite{KW}
	in relation with a hashing problem is related to various topics in algebraic combinatorics; 
	labelled trees, hyperplane arrangements, noncrossing set partitions, diagonal harmonics...  
	For more, see the solution to the problem 5.49 in \cite{Stanley2} and the references therein. 
	
	A clever argument by Pollak counts the number of parking functions of length $n$ by constructing 
	a one-to-one correspondence between the set of parking functions and the group 
	$\Z^n_{n+1}/\Diag\Z_{n+1}^n$, 
	where  
	$\Z_{n+1}:= \Z / (n+1) \Z$ denotes the cyclic group of order $n+1$ and
	$\Diag\Z_{n+1}^n$ stands for the diagonal in 
	$
	\Z_{n+1}^n = \Z_{n+1} \times \cdots \times \Z_{n+1}.
	$ 
	This correspondence sends a parking function $(a_i)$ to the element $(a_i -1 \ \text{mod}\ n+1)$.
	Consequently, the number of parking functions of length $n$ is 
	$
	P_n:=(n+1)^{n-1}.
	$

	In this paper, we view the set of parking functions of length $n$ as an abelian group with the symmetric group $S_n$ action on it
	and investigate the representation theoretical and combinatorial consequences of this observation. In the representation theory side, it
	leads us to the realm of Gelfand pairs. In the combinatorics side, it motivates us to define a new $q$ analogue of the Catalan numbers.


%

	\subsection{Outline}	
	Let $\varGamma$ be a finite Abelian group, and for each $n\geq 1$ denote by $\wvG{n}$ 
	the quotient of $\vG \times \cdots \times \vG$ ($n$ copies) by its diagonal
	subgroup $\Delta \vG^n := \{ (g,\dots, g):\ g\in \vG \}$.
	Consider $\wvG{n} \rtimes S_n$, the semidirect product of $\wvG{n}$ with the 
	symmetric group $S_n$.

	Our main results are the following: 
	We observe that the pair $(\wvG{n} \rtimes S_n, S_n)$ is a Gelfand pair
	(Lemma \ref{L:mainlemma}). 
	Focusing on the finite cyclic groups, $\vG = \Z_r$, $r\in \N$
	in Theorem \ref{T:main_theorem}, we find explicit descriptions of the irreducible 
	constituents of the multiplicity free representation
	$
	\Ind{ \wvG{n} \rtimes S_n}{S_n}{1}.
	$ 
	Note that, when $r=n+1$, the group $\wvG{n}$ is 
	the ``group of parking functions,'' $\Z^n_{n+1}/\Diag\Z_{n+1}^n$.	
	Let $\bigoplus_\alpha V_\alpha$ be the decomposition of 
	$\Ind{\wvG{n} \rtimes S_n}{S_n}{1}$ into irreducibles, and let  
	$C_n(q) = \sum_\alpha  q^{\dim V_\alpha}$. Let $D(n)$, $n\in \N$ denote the set 
		$$
		\{ (k_0,\dots, k_n) \in \N^{n+1} :\  \sum k_i = n,\ \text{and}\ n+1\ \text{divides}\ 
		\sum_{i=1}^{n} i k_i \}.
		$$	
	It follows from the corollaries of our main result that 
	\begin{align*}
	C_n(q) = \sum_{ \k \in D(n)} q^{n \choose k_0,k_1,\ldots, k_n},
	\end{align*}
	where ${n \choose k_0,k_1,\ldots, k_n}$ is a multinomial coefficient. 
	Hence, 
	\begin{align*}
	C_n (1) = \frac{1}{n+1} {2n \choose n},\ \text{and}\ \frac{d C_n(q)}{dq} \Bigg|_{q=1} = (n+1)^{n-1}.
	\end{align*}
	It is interesting that 
	$D(n)$ can be identified with the set of polynomials $p(x) \in \Z[x]$ with non-negative integer 
	coefficients such that $p(1) =n$, and $p'(1)$ is divisible by $n+1$.

	Let $S(n)$ denote the set of all integer sequences $\mf{b}=(b_1,\dots,b_n) \in \N^n$ 
	satisfying $\sum_{i=1}^j b_i \geq j$ for all $j=1,\dots,n$, and 
	$\sum_{i=1}^n b_i =n$.
	It is well known that $\# S(n) = C_n(1)$. See $(o^5)$ in \cite{StanleyAddendum}.
	Define, 
	\begin{align*}
	S_n(q) = \sum_{ \mf{b} \in S(n)} q^{n \choose b_1,\ldots, b_n}.
	\end{align*}
	\begin{Conjecture}
	The polynomials $C_n(q)$ and $S_n (q)$ are identical for all $n\geq 1$.
	\end{Conjecture}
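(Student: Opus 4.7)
The plan is to construct an explicit bijection $\phi \colon D(n) \to S(n)$ that preserves the multiset of entries. Viewing $\k \in D(n)$ as a sequence of length $n+1$ and $\mf{b} \in S(n)$ as a sequence of length $n$, the two multisets will differ only by a single extra zero entry on the $D(n)$ side. Since the multinomial coefficients $\binom{n}{k_0,\ldots,k_n}$ and $\binom{n}{b_1,\ldots,b_n}$ depend only on these multisets (using $0! = 1$), such a $\phi$ forces $C_n(q) = S_n(q)$ term by term.

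To build $\phi$, I would invoke the cycle lemma of Dvoretzky and Motzkin. Given $\k = (k_0, \ldots, k_n) \in D(n)$, set $e_i := k_i - 1$; then each $e_i \geq -1$ and $\sum_{i=0}^{n} e_i = -1$. The cycle lemma provides a unique index $j \in \{0, 1, \ldots, n\}$ such that the partial sums of $(e_j, e_{j+1}, \ldots, e_{j+n-1})$ (indices modulo $n+1$) are all non-negative; equivalently, the single omitted entry satisfies $e_{j-1} = -1$, i.e., $k_{j-1} = 0$. I define
\[ \phi(\k) := (k_j, k_{j+1}, \ldots, k_{j+n-1}), \]
with indices taken modulo $n+1$. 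The non-negativity of partial sums translates to $\sum_{i=1}^{s} b_i \geq s$ for every $s$, and $\sum_{i=1}^{n} b_i = n - k_{j-1} = n$, so $\phi(\k) \in S(n)$.

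To verify bijectivity, I would recognize $D(n)$ and $\phi(D(n))$ as two complete systems of representatives for the cyclic $\Z_{n+1}$-shift action on the set of non-negative integer sequences of length $n+1$ summing to $n$. A short computation shows that shifting by one index changes $\sum_{i=0}^n i c_i$ by $1 \pmod{n+1}$, so every orbit has size exactly $n+1$ and meets $D(n)$ in precisely one element; the cycle lemma says the same for the ballot-like rotations constituting $\phi(D(n))$. Combined with $|D(n)| = |S(n)| = C_n$, this makes $\phi$ a bijection onto $S(n)$. The multiset identity $\{k_0,\ldots,k_n\} = \{0\} \sqcup \{b_1,\ldots,b_n\}$ is immediate from $k_{j-1} = 0$, and yields the desired equality of multinomial coefficients.

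The only real obstacle is the initial insight that the congruence defining $D(n)$ and the prefix inequalities defining $S(n)$ are two natural sections of the \emph{same} cyclic action; once that is recognized, the cycle lemma carries all the weight and preservation of the multinomial is automatic. I would sanity-check the bijection on small cases (for $n=3$, one verifies, for example, that $(1,1,0,1) \in D(3)$ maps to $(1,1,1) \in S(3)$ via the rotation starting at index $j = 3$) to guard against indexing slips.
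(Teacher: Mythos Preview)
Your argument is correct, and it goes well beyond what the paper does: the paper states this as a \emph{Conjecture}, verified only numerically for $n\le 10$, and offers no proof. What you have written is a genuine proof of that conjecture.

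The mechanism you use is exactly the right one. The set $X$ of non-negative integer sequences of length $n+1$ summing to $n$ carries a $\Z_{n+1}$-action by cyclic shift; your computation that a single shift changes $\sum_i i c_i$ by $\pm 1 \pmod{n+1}$ shows the action is free, so every orbit has size $n+1$ and meets $D(n)$ exactly once. The Dvoretzky--Motzkin cycle lemma gives the second transversal: each orbit contains a unique rotation whose first $n$ prefix sums of $(k_i-1)$ are non-negative, forcing the omitted entry to be $0$ and the first $n$ entries to lie in $S(n)$. Your map $\phi$ just sends one transversal to the other within each orbit, so it is automatically a bijection, and since rotation preserves the multiset of entries, the multinomial exponents agree. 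The paper, by contrast, has no such structural picture; its only evidence is the brute-force check through $n=10$ and a remark that Amdeberhan has a bijection between $D(n)$ and a different Catalan family $E(n)$, which does not address the exponents.

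Two small points of presentation. First, when you say ``$\phi(D(n))$ is a system of representatives,'' you mean the padded sequences $(b_1,\ldots,b_n,0)$; say this explicitly. Second, you invoke $|D(n)|=|S(n)|=C_n$ to conclude bijectivity, but you do not need it: bijectivity already follows from both sets being transversals of the same free action and $\phi$ acting within orbits. The cardinality equality is a corollary, not an input.
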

	The conjecture has been verified for $n\leq 10$.

	We organized our paper as follows. In Section \ref{sec-1}
	we introduce our notation and background. 
	In Section \ref{sec-2} we prove our main theorem. 
	In Section \ref{sec-3} we introduce and study a $q$-analogue of the Catalan numbers.
	We end our article with final remarks and open questions 
	in Section \ref{sec-final}.

	\section{\textbf{Preliminaries}}
	\label{sec-1}

	It is clear from the definition that the 
	symmetric group on $n$ letters $S_n$ acts on the set of parking functions of length $n$.
	For $n=3$, under the $S_3$ action, the set of parking functions divided into $5$ orbits:

{\bf
111

112, 121, 211

113, 131, 311

122, 212, 221
 
123, 132, 213, 231, 312, 321}.\\
	In each line, the first entry is chosen to be non-decreasing.
	We observe that orbits can be parameterized in terms of 
	non-decreasing parking functions. More generally, 
	non-decreasing integer sequences $(b_1,\dots,b_n)$ such that 
	$1\leq b_i \leq i$ for all $i=1,\dots,n$ is counted by the $n$-th Catalan number, 
	$$
	C_n := \frac{1}{n+1}{ 2n \choose n }.
	$$

	\subsection{Permutation Representation}
	Given a group $G$ and an action of $G$ on some set $X$, 
	the space of complex valued functions $\L(X)$ on $X$ becomes 
	a representation of $G$:
	Given $g\in G$ and $\psi: X \to \C$, define the action of $G$ on $\L(X)$
	as follows:
	$$ 
	(g\cdot \psi)(x) := \psi(g^{-1}\cdot x).
	$$
	The representation $\L(X)$ induced from the action of $G$ on the set $X$ 
	is called \emph{a permutation representation}.
	When $X$ itself is a vector space $V$ over complex numbers, $\L(V)$ is  
	the dual space, the action is the \emph{contragradient action}, and will be also denoted
	by $V^\vee$.


	From now on, assume that $G$ and $X$ are finite. 
	Then, one can consider the vector space $\C\cdot X$
	generated on the elements of $X$, and $G$ acts on $\C\cdot X$
	by  
	$$ 
	g \cdot (\sum_{x\in X} a_x x ) := \sum_{x \in X} a_x (g\cdot x),\ g\in G.
	$$
        Any $G$-representation $V$
	attains a $G$-invariant inner product, by averaging a given inner product
	over the group $G$. Consequently, any such 
	$V$ is isomorphic to its dual $V^\vee$ 
	as a $G$-representation.
	For this reason, the representations $\C\cdot X$ and $\L(X)$ are isomorphic
	when $G$ and $X$ are finite. In fact, the isomorphism is given by: 
	$$\begin{array}{ccc}
	\C\cdot X & \to & \L(X) \\
	x & \mapsto & \delta_x,
	\end{array}
	$$
	where $\delta_x$ is the Dirac delta function, $\delta_x(y)=1$ only if $x=y$
	for $x,y\in X$.

	When $G$ acts on a set $X$, then $X$ is decomposed into $G$-orbits.
	Similarly, the representation $\L(X)$ is decomposed into
	irreducible representations. However, by virtue of linearity,
	the decomposition of $\L(X)$ is
	much finer than that of the set $X$.

	Denote the set of orbits of $X$ by $\Orb(X)$. Each $G$-orbit is stable
	under the $G$-action, that is if $x\in O$ for some orbit $O$, 
	then for any $g\in G$,
	$g\cdot x\in O$. Therefore to each orbit we can attach a representation
	$\L(O)$. Since 
	$$
	X = \bigsqcup_{O \in \Orb(X)} O,
	$$
	the representation $\L(X)$ decomposes as
	$
	\L(X) = \oplus_{O \in \Orb(X)} \L(O).
	$
	However, typically one can decompose $\L(X)$ further by decomposing the
	representations $\L(O)$ corresponding to the orbits $O \in \Orb(X)$.
	By $\#Y$ denote the number of elements of a finite set $Y$, then
	$$ 
	\dim \L(O) = \, \text{Number of elements of }\, O = \#O.
	$$

	\subsection{Parking Function Module}
	When $X$ is the set of parkings function of length $n$, 
	we call the representation $\L(X)$ the \emph{ parking function module}
	and denote it by $\pf(n)$.

	Let's go back to the set of parking function of length $3$. 
	On this set, the symmetric group with $S_3$ acts and divides into $5$ orbits.
	We may parameterize the orbits of $S_n$ on $\pf(n)$ by the non-decreasing 
	elements they contain. Then,
	$$
	\pf(3) = \L({\bf 111}) 
	\oplus \L({\bf 112})
	\oplus \L({\bf 113})
	\oplus \L({\bf 122})
	\oplus \L({\bf 123}).
	$$

	It is well known that $S_3$ has $3$ irreducible representations,
	the {\em trivial representation} (of dimension $1$) which acts trivially on the
	one dimensional complex space, the {\em sign representation} 
	(of dimension $1$) which acts by the sign of the permutation on the one 
	dimensional complex vector space and a $2$-dimensional representation. 

	The symmetric group $S_3$ acts on $3$ letters ($=X$), 
	hence on the corresponding
	function space $\L(X)$, called the \emph{standard representation}. 
	The function $\delta_1 + \delta_2 + \delta_3$ is 
	invariant under $S_3$-action and form a one dimensional representation 
	(isomorphic to the trivial representation).  Let 
	$V$ be the orthogonal subspace to $\delta_1 + \delta_2 + \delta_3$. Then,
	$V$ is a two dimensional irreducible representation. In the basis
	$\delta_1, \delta_2, \delta_3$, the subspace $V$ is
	$$ V = \{ \sum a_i \delta_i : \sum a_i = 0 \}.$$

	Of the list above, $\L({\bf 111})$ is isomorphic to the trivial representation,  
	the three dimensional representations 
	$\L({\bf 112}), \L({\bf 113}), \L({\bf 122})$ are isomorphic to the standard
	representation (hence decompose as the sum of trivial representation and 
	the two dimensional representation), 
	$\L({\bf 123})$ is the regular representation of the group $S_3$.

	Given a representation $V$ and an irreducible representation of $V_i$ of a group
	$G$, the number of times the irreducible representation $V_i$ appears 
	in the decomposition of the representation $V$ 
	is called the {\em multiplicity} of $V_i$ in $V$.

	We summarize what we have seen in a table of multiplicities:
	$$
	\begin{array}{lccc}
	 & triv & sign & 2-dim \\
	\L({\bf 111}) & 1 & & \\
	\L({\bf 112}) & 1 & & 1 \\
	\L({\bf 113})  & 1 & & 1 \\
	\L({\bf 122}) & 1 & & 1 \\
	\L({\bf 123})  & 1 & 1 & 2.
	\end{array}
	$$

	Observe that the trivial representation appears in each representation
	$\L(O)$ for \emph{once}. 
	We state without proof that it is true in general.
	
	\begin{Lemma} \label{L:Trivial_in_PF}
	For all $n\geq 1$, the trivial representation of $S_n$ appears 
	in each orbit representation $\L(O)$ in $\pf(n)$ exactly once.
	\end{Lemma}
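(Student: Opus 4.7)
The plan is to reduce the statement to a standard fact about transitive permutation representations: namely, that the permutation representation of a finite group on a transitive $G$-set contains the trivial representation exactly once. Since each orbit $O$ is, by definition, a transitive $S_n$-set, this gives the desired result immediately.

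More concretely, I would fix an orbit $O \subseteq \text{PF}(n)$ and pick a representative parking function $x \in O$ with stabilizer $H = \text{Stab}_{S_n}(x) \leq S_n$. Then as $S_n$-representations, $\mathcal{L}(O) \cong \text{Ind}_H^{S_n} \mathbf{1}_H$, by sending $\delta_{g\cdot x}$ to the coset indicator of $gH$. Applying Frobenius reciprocity gives
\begin{equation*}
\langle \mathcal{L}(O), \mathbf{1}_{S_n} \rangle_{S_n} \;=\; \langle \text{Ind}_H^{S_n} \mathbf{1}_H, \mathbf{1}_{S_n} \rangle_{S_n} \;=\; \langle \mathbf{1}_H, \text{Res}_H^{S_n} \mathbf{1}_{S_n} \rangle_H \;=\; 1,
\end{equation*}
which is precisely the statement that the trivial representation appears in $\mathcal{L}(O)$ with multiplicity one.

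An equally short alternative, which I might actually prefer for a reader oriented paper, is to exhibit the invariant vector directly: the function $\sum_{y \in O} \delta_y \in \mathcal{L}(O)$ is manifestly $S_n$-fixed, and any $S_n$-invariant function on $O$ must be constant on $O$ (since $S_n$ acts transitively on $O$), so the space of $S_n$-invariants in $\mathcal{L}(O)$ is one-dimensional. Since the multiplicity of the trivial representation in $\mathcal{L}(O)$ equals $\dim \mathcal{L}(O)^{S_n}$, we again obtain multiplicity one.

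There is no real obstacle here: the only mild point is to notice that each orbit $O$ is transitive essentially by definition, so the result is a corollary of a well known general fact rather than anything specific to parking functions. The only reason it merits stating as a lemma is that it plays the initial role in motivating the Gelfand pair statement $(\widetilde{\Gamma^n} \rtimes S_n, S_n)$ that follows, where the analogous but deeper fact is that the \emph{entire} induced representation $\text{Ind}_{S_n}^{\widetilde{\Gamma^n} \rtimes S_n} \mathbf{1}$ is multiplicity free.
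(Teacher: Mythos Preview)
Your proof is correct, and both approaches you give (Frobenius reciprocity applied to $\L(O)\cong\Ind{S_n}{H}{1}$, and the direct identification of $\L(O)^{S_n}$ with the constant functions on the transitive set $O$) are the standard ways to establish this fact. The paper itself does not prove the lemma at all: it explicitly says ``We state without proof that it is true in general'' just before the statement, so there is no paper proof to compare against. Your write-up therefore supplies what the paper omits, and your closing remark correctly identifies the lemma's role as motivation for the genuine Gelfand-pair statement that follows.
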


\subsection{Intertwining maps and Multiplicity Free Representations}

	A representation $V$ of $G$ is called \emph{multiplicity free} if 
	no irreducible representation of $G$ appears more than once in $V$.

	A $\C$-linear map $T: V \to W$ between two representations $V$ and $W$ of
	$G$ is called \emph{intertwining}, if 
	$$ 
	T(g\cdot v) = g \cdot T(v)
	$$
	for any $g\in G$ and $v\in V$. 
	If $V$ and $W$ are irreducible, then there are two cases:
	\begin{enumerate}
	\item $V$ and $W$ are non-isomorphic, and $T=0$,
	\item $V$ and $W$ are isomorphic, and $T=\lambda \id_V$ for some 
	$\lambda \in \C$.
	\end{enumerate}

	This is called the \emph{Schur's Lemma}.
	It follows that if $V=\bigoplus V_i^{m_i},\ m_i \in \Z_{\geq 0}$ is the decomposition of $V$ 
	into irreducibles, then the set of intertwining operators on $V$, 
	denoted by $\homself_G(V)$ is
	$$ 
	\homself_G (V) = \homself_G ( \bigoplus V_i^{m_i} ) = \bigoplus \homself( \C^{m_i} ).
	$$
	Therefore, $V$ is multiplicity free if and only if $m_i \leq 1$ if and only
	if the algebra $\homself_G(V)$ is commutative.

	\subsection{Gelfand Pairs}
	Given a finite group $G$ and a subgroup $K$, the group $G$ acts on the left
	coset space $G/K$. The pair $(G, K)$ is called a \emph{Gelfand pair} if
	the $G$-representation $\L(G/K)$ is multiplicity free.

	Given two subgroups $K_1$ and $K_2$ of $G$, the space of functions 
	$\L(K_1\backslash G/K_2)$
	on the double coset space $K_1\backslash G / K_2$ can be identified with the space of $K_1 	\times K_2$-invariant
	functions on $G$, consisting of those function $\psi: G \to \C$ so that 
	$\psi(k_1 g k_2)=	\psi(g)$ for all $(k_1, k_2) \in K_1 \times K_2$ and $g \in G$.

	The space of intertwining operators $\homself_G \L(G/K)$ on $\L(G/K)$ coincides with  
	$\L(K\backslash G/K)$.
	The algebra $\homself_G \L(G/K)$ is called the \emph{Hecke algebra} of the pair $(G,K)$.
	It follows from Schur's Lemma that the pair $(G,K)$ is a Gelfand pair if and only if 
	its Hecke algebra is commutative.

	The Lemma \ref{L:Trivial_in_PF} can be paraphrased as follows:  
	\begin{Lemma}
	The pair $(G,K)$ is a Gelfand pair for $G=\pf(n) \ltimes S_n$ and $K=S_n$. 
	Here, we use the identification of $\pf(n)$ with the Abelian group 
	$\Z^n_{n+1}/\Diag \Z_{n+1}^n$. The symmetric group $S_n$ acts on the coordinates 
	of $\pf(n)$.
	\end{Lemma}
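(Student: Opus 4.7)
My plan is to invoke the Hecke algebra criterion from Section \ref{sec-1}: it suffices to show that the convolution algebra $\L(K\backslash G/K)$ is commutative. Write $A := \wvG{n}$, so $G = A \rtimes S_n$ and every element of $G$ factors uniquely as $g = ak$ with $a \in A$, $k \in K$. Since $K$ absorbs on the right, $KgK = KaK$; and $KaK = Ka'K$ holds iff $a' = k\cdot a$ for some $k \in K$ (this uses $A \cap K = \{1\}$ together with the normality of $A$). Thus the double coset space $K\backslash G/K$ is in natural bijection with the orbit space $A/K$. Correspondingly, I would identify $\L(K\backslash G/K)$ as a vector space with the $K$-invariants $\L(A)^K$, via restriction $f \mapsto f|_A$, its inverse being the extension $\tilde{f}(ak) := f(a)$, which is well-defined by the uniqueness of the factorization $g = ak$ and is bi-invariant by the $K$-invariance of $f$ on $A$.

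The main technical step is to upgrade this vector-space identification to an algebra isomorphism, where $\L(A)^K \subset \C[A]$ is given the convolution product from the abelian group $A$. For $f_1, f_2 \in \L(K\backslash G/K)$ and $a \in A$, I would expand
$$
(f_1 * f_2)(a) = \sum_{g \in G} f_1(g)\,f_2(g^{-1}a),
$$
parametrize $g = bk$ with $b \in A$, $k \in K$, rewrite $g^{-1}a = (k^{-1}\cdot b^{-1})(k^{-1}\cdot a)\,k^{-1}$ using the fact that $A$ is abelian, and then use the $K$-invariance of $f_2|_A$ to dispense with the trailing $k^{-1}$ and to eliminate the action of $k^{-1}$ on $b^{-1}a$. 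The sum over $k \in K$ then contributes a factor of $|K|$, collapsing $(f_1 * f_2)(a)$ to $|K|\,(f_1|_A * f_2|_A)(a)$, that is, $|K|$ times the convolution in $\C[A]$. I expect this bookkeeping with the semidirect product structure to be the principal obstacle.

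Once the (rescaled) algebra isomorphism $\L(K\backslash G/K) \isom \L(A)^K$ is in hand, the proof concludes quickly: since $A = \wvG{n}$ is abelian, its group algebra $\C[A] \isom \L(A)$ is commutative, and therefore so is the subalgebra $\L(A)^K$. Hence the Hecke algebra $\L(K\backslash G/K)$ is commutative, and by the criterion recalled in Section \ref{sec-1}, $(G, K)$ is a Gelfand pair.
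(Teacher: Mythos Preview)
Your argument is correct: the identification of the Hecke algebra $\L(K\backslash G/K)$ with the $K$-invariants $\L(A)^K\subset\C[A]$ (up to the harmless scalar $|K|$) is carried out cleanly, and commutativity then follows immediately from the abelianness of $A=\wvG{n}$. This is a genuinely different route from the paper's. The paper does not prove this lemma directly; it presents the statement as a rephrasing of the preceding Lemma~\ref{L:Trivial_in_PF} (itself stated without proof), and later, for the general abelian $\vG$ in Lemma~\ref{L:mainlemma}, simply cites \cite{Ceccherini}. Your approach has the advantage of being fully self-contained and of isolating exactly what makes the pair Gelfand---namely that $A$ is abelian and normal in $G=A\rtimes K$---so it applies verbatim to any semidirect product of this shape, recovering the general statement of Lemma~\ref{L:mainlemma} without appeal to the literature. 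The paper's approach, by contrast, outsources the work but situates the result within the broader framework of wreath-product Gelfand pairs developed in \cite{Mizukawa} and \cite{Ceccherini}, which is what it needs anyway for the explicit decomposition in Theorem~\ref{T:main_theorem}.
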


	Equivalently, the pair $(G', K')$ is 
	a Gelfand pair, where $G' = \Z^n_{n+1} \ltimes S_n$ 
	and $K'=\Diag \Z_{n+1}^n \ltimes S_n$. Indeed, the symmetric group
	$S_n$ fixes the elements on the diagonal $\Diag  \Z_{n+1}$ point-wise,
	hence this action is trivial, and $K' \cong \Diag  \Z_{n+1}^n \times S_n$.
	The group $G'$ is an example of wreath product of finite groups:
	$G'= \Z^n_{n+1} \ltimes S_n$ is the wreath product of 
	$ \Z^n_{n+1}$ by $S_n$.

	\subsection{Wreath Products}
	Let $\vG$ be a finite group. The symmetric group $S_n$ acts on 
	$\vG^n$ by permuting the coordinates. 
	Form the semi-direct product $\vG^n \ltimes S_n$. The
	resulting finite group is called the \emph{wreath product} of $\vG$
	by $S_n$ and denoted by $\vG \wr S_n$. 
	An element of $\vG\wr S_n$ is denoted by the symbol 
	$(g_1, \ldots, g_n; \sigma)$, where $g_i \in \vG$ and $\sigma \in S_n$.
	The product of two elements $(g_1, \ldots, g_n; \sigma)$ and 
	$(h_1, \ldots, h_n; \tau)$ is
	$$(g_1, \ldots, g_n; \sigma)(h_1, \ldots, h_n; \tau) = 
	(g_1 h_{\sigma\inv(1)}, \ldots, g_n h_{\sigma\inv(n)}; \sigma\tau).$$

	We have previously stated that the pair 
	$(\Z_{n+1} \wr S_n, \Diag \Z_{n+1}^n \ltimes S_n)$ is Gelfand. 
	Inspired by this observation, we can ask for any group $\vG$:

	\vspace{.3cm}
	\noindent{\sc Question.}
	Is $(\vG \wr S_n, \Diag \vG^n \times S_n)$ a Gelfand pair?
	\vspace{.3cm}

	When $n=2$, the answer is ``yes'' for any group $\vG$. For $n\geq 3$, the
	results are mixed. Generally, the answer is no.
	We have tested this claim using \GAP. Some of the results are:
	$$
	\begin{array}{ccc}
	\text{Group} \vG & \text{true} & \text{false} \\
	\hline 
	S_3 & n\leq 5 & n=6 \\
	A_4 & n\leq 3 & n=4 \\
	GL(2, {\F}_3) & n=2 & n=3 \\
	SL(3, {\F}_2) & n=2 & n=3
	\end{array}
	$$

	In general, one can view being a Gelfand pair for $(G,K)$ as
	a measure of distance between the group $G$ and 
	its subgroup $K$. In some sense, for a Gelfand 
	pair $(G,K)$, the distance between the subgroup $K$ and the whole
	group $G$ is not that big. In the examples above, as $n$ gets larger, 
	the group $G=\vG \wr S_n$ gets much larger than the subgroup 
	$K=\vG \times S_n$. We believe that this is why the 
	answer is typically ``no.''

	\section{\textbf{Generalized Parking Functions}}
	\label{sec-2}

	Let $\vG$ be a finite group. It is known that 
	the pair $(\vG \wr S_n, S_n)$ is Gelfand, in the case of finite cyclic group
	$\Z_r$ by \cite{Mizukawa}, and in the case of arbitrary Abelian group
	$\vG$ by  \cite{Ceccherini}.

	For the rest of article we denote  by $\vG$  a finite Abelian group and  
	by $\Delta \vG^n$ the copy of $\vG$ sitting inside $\vG^n$ diagonally,
	and by $\wvG{n}$ the quotient group $\vG^n / \Delta \vG^n$.
	Finally, the image of an element $g\in \vG^n$ in $\wvG{n}$ is denoted by 
	$\widetilde{g}$

	The symmetric group $S_n$ acts naturally on the product $\vG^n$ and 
	hence on the  leaving the quotient group $\vG^n / \Delta \vG^n$.  
	Denote this $S_n$-representation by $V_n$. 
	There is a natural projection 
	\begin{align}\label{A:isom}
	\pi: \vG^n \rtimes S_n &\rightarrow \wvG{n} \rtimes S_n\\   
	(g,\sigma) &\mapsto (\widetilde{g}, \sigma). \notag
	\end{align}

	Denote the identity element of the symmetric group $S_n$ by $e_0$.
	\begin{Lemma}
	 The group
	 $\wvG{n} \rtimes S_n$ is isomorphic to 
	 $(\vG^n \rtimes S_n)/ (\Delta \vG^n \rtimes \{ e_0\} )$.
	\end{Lemma}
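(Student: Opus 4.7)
The plan is to exhibit $\pi$ from (\ref{A:isom}) as a surjective group homomorphism and then apply the first isomorphism theorem, after verifying that $\Delta \vG^n \rtimes \{ e_0 \}$ is a normal subgroup with kernel $\pi$.

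First I would note that the key structural fact driving everything is that $\Delta \vG^n$ is pointwise fixed by the $S_n$-action on $\vG^n$ (any permutation sends a constant tuple to itself). From this two things follow immediately: the quotient $\wvG{n} = \vG^n / \Delta \vG^n$ inherits an $S_n$-action, so the target group $\wvG{n} \rtimes S_n$ makes sense; and the subgroup $\Delta \vG^n \rtimes \{e_0\} \leq \vG^n \rtimes S_n$ is normal. The latter is a short direct computation: for $(h,\tau) \in \vG^n \rtimes S_n$ and $(d, e_0) \in \Delta \vG^n \rtimes \{e_0\}$, using the multiplication rule stated in the preliminaries one gets
\begin{equation*}
(h,\tau)(d, e_0)(h,\tau)^{-1} = \bigl( h \cdot (\tau \cdot d) \cdot h^{-1},\ e_0 \bigr) = (d, e_0),
\end{equation*}
where we use that $\vG$ is abelian and $\tau \cdot d = d$ since $d$ is diagonal.

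Next, I would check that the map $\pi(g, \sigma) = (\widetilde{g}, \sigma)$ is a homomorphism. Since the $S_n$-action on $\vG^n$ descends to $\wvG{n}$ (i.e.\ $\widetilde{\sigma \cdot h} = \sigma \cdot \widetilde{h}$), one has
\begin{equation*}
\pi\bigl( (g, \sigma)(h, \tau) \bigr) = \bigl( \widetilde{g \cdot (\sigma \cdot h)},\ \sigma\tau \bigr) = \bigl( \widetilde{g} \cdot (\sigma \cdot \widetilde{h}),\ \sigma\tau \bigr) = \pi(g,\sigma)\,\pi(h,\tau).
\end{equation*}
Surjectivity is obvious from the definition of $\pi$. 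For the kernel, $(g, \sigma) \in \ker \pi$ forces $\sigma = e_0$ and $\widetilde{g} = \widetilde{e}$, i.e.\ $g \in \Delta \vG^n$; conversely any such pair is in the kernel. Hence $\ker \pi = \Delta \vG^n \rtimes \{e_0\}$.

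Finally, applying the first isomorphism theorem yields the stated isomorphism $\wvG{n} \rtimes S_n \cong (\vG^n \rtimes S_n)/(\Delta \vG^n \rtimes \{e_0\})$. There is no real obstacle here; the only subtlety worth flagging is the pointwise fixing of $\Delta \vG^n$ by $S_n$, which is what makes both the semidirect product on the quotient side and the normality of the kernel work. I would present the argument in exactly the order above: normality first, then homomorphism and surjectivity, then kernel, then the isomorphism theorem.
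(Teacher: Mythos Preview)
Your proposal is correct and follows essentially the same route as the paper: both use the map $\pi$ from (\ref{A:isom}), verify it is a surjective homomorphism with kernel $\Delta \vG^n \rtimes \{e_0\}$, and conclude via the first isomorphism theorem. The only cosmetic differences are that you establish normality by a direct conjugation computation before identifying the kernel (whereas the paper deduces normality from the kernel identification), and the paper appends an additional splitting argument showing $\vG^n \rtimes S_n$ is an internal semidirect product, which goes beyond what the lemma itself asserts.
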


	\begin{proof}
	Let $\pi :  \vG^n \rtimes S_n \rightarrow \wvG{n} \rtimes S_n$ 
	be as in (\ref{A:isom}).
	It is clear that $\pi$ is surjective. 
	We check that $\pi$ is a homomorphism, indeed:
	\begin{align*}
	\pi (( g,\sigma) (h, \tau)) &= \pi ( (g \sigma (h), \sigma \tau ))\\
	&=  ( \widetilde{g \sigma (h)}, \sigma \tau ) \\
	&= ( \widetilde{g}, \sigma ) ( \widetilde{h}, \tau).
	\end{align*}

	Suppose now that $\pi (g,\sigma) = (\widetilde{0}, e_0)$. 
	Then, $\sigma = e_0$ and $g \in \Delta \vG^n$. 
	In other words, the kernel of $\pi$ is $\Delta \vG^n \rtimes \{ e_0 \}$. 
	In particular, $\Delta \vG^n \rtimes \{e_0\}$ is normal in $\vG^n \rtimes S_n$.
	Since $\vG^n$ is Abelian, there is a splitting  
	$\vG^n = H \cdot \Delta \vG^n$ where $H \cap \Delta \vG^n = \{ 0 \}$, 
	and $H$ is isomorphic to $\vG^n / \Delta \vG^n$. 
	Since 
	$$
	(H \rtimes S_n ) \cdot (\Delta \vG^n \rtimes {e_0})  = 
	(\Delta \vG^n \cdot H) \rtimes  S_n = \vG^n \rtimes S_n,$$
	it follows that $H \rtimes S_n \cong \vG^n / \Delta \vG^n \rtimes S_n.$ 
	In other words, $\vG^n \rtimes S_n$ is the semidirect 
	product of $\Delta \vG^n \rtimes S_n$ and 
	$\wvG{n} \rtimes S_n$.

	\end{proof}

	\begin{Lemma}\label{L:mainlemma}
	$(\wvG{n} \rtimes S_n, S_n)$ is a Gelfand pair. 
	\end{Lemma}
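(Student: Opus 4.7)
The plan is to deduce the lemma from the cited Gelfand pair result for wreath products (Mizukawa in the cyclic case, Ceccherini--Silberstein et al.\ in the general Abelian case), which states that $(\vG^n \rtimes S_n, S_n)$ is a Gelfand pair for every finite Abelian group $\vG$. The bridge between that result and the present statement is the surjection
$$\pi: \vG^n \rtimes S_n \twoheadrightarrow \wvG{n} \rtimes S_n$$
constructed in the previous lemma, whose kernel is $N := \Diag \vG^n \rtimes \{e_0\}$.

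First I set $G := \vG^n \rtimes S_n$ and $H' := \Diag \vG^n \rtimes S_n$. Since $S_n$ fixes the diagonal pointwise, $H'$ is actually a direct product $\Diag \vG^n \times S_n$, and we have the chain of subgroups $S_n \subseteq H' \subseteq G$. A direct check shows $H' = \pi^{-1}(S_n)$, and hence $\pi$ induces a $G$-equivariant bijection $G/H' \to (\wvG{n} \rtimes S_n)/S_n$. This yields an isomorphism of $G$-modules
$$\pi^{*}\,\Ind{\wvG{n} \rtimes S_n}{S_n}{1} \;\cong\; \Ind{G}{H'}{1}.$$
Irreducible representations of $\wvG{n} \rtimes S_n$ are precisely the irreducibles of $G$ on which $N$ acts trivially, and pullback along $\pi$ preserves multiplicities, so it suffices to prove that the right-hand side is multiplicity-free as a $G$-module.

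For this I will invoke the general principle that if $(G,H)$ is a Gelfand pair and $H \subseteq H'$, then $(G,H')$ is a Gelfand pair as well. Indeed, the Hecke algebra $\homself_G \L(G/H')$ of $H'$-biinvariant functions on $G$ equals the corner $e_{H'} \cdot \homself_G \L(G/H) \cdot e_{H'}$, where $e_{H'} = \frac{1}{|H'|}\mathbf{1}_{H'}$ is the idempotent in $\mathbb{C}[G]$ corresponding to $H'$; a corner of a commutative algebra is commutative. Applying this principle with $H = S_n$ and $H' = \Diag \vG^n \rtimes S_n$, combined with the Gelfand pair property of $(G, S_n)$, yields that $\Ind{G}{H'}{1}$ is multiplicity-free, completing the argument.

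I expect no serious obstacle beyond careful bookkeeping. The only verification of real substance is the Hecke-algebra identity $\homself_G \L(G/H') = e_{H'} \homself_G \L(G/H) e_{H'}$ when $H \subseteq H'$, which reduces to the short observation that convolving on either side by $e_{H'}$ projects an $H$-biinvariant function to its $H'$-biinvariant average. The remainder is formal nonsense about pullbacks of representations along a surjective homomorphism.
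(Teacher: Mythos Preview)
Your argument is correct. The paper's own proof is the single sentence ``This follows from \cite{Ceccherini},'' with no further details. You have supplied the explicit bridge that the authors leave implicit: pulling back along the quotient $\pi$ identifies $\Ind{\wvG{n}\rtimes S_n}{S_n}{1}$ with $\Ind{\vG\wr S_n}{\Diag\vG^n \times S_n}{1}$, and then the general fact that enlarging the subgroup in a Gelfand pair preserves the Gelfand property (your corner-of-Hecke-algebra argument; equivalently, $\dim V^{H'}\le \dim V^{H}$ for every irreducible $V$ when $H\subseteq H'$, by Frobenius reciprocity) reduces everything to the known Gelfand pair $(\vG\wr S_n, S_n)$. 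What your route buys is a self-contained deduction from the wreath-product case, so the reader need not locate a suitably general theorem in \cite{Ceccherini}; the paper's bare citation is terser but less transparent about why the quotient inherits the Gelfand property.
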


	This follows from \cite{Ceccherini}.

\subsection{$\vG$ is a finite cyclic group, $\Z_r$}

	The irreducible representations of the group $\vG \wr S_n = \Z_r \wr S_n$ 
	are described as follows (see \cite{ATY}):
	Let $\k=(k_0, \ldots, k_{r-1})$ be an $r$-tuple of nonnegative integers, 
	and let $\lambda = \lambda_{\k} = (\lambda_1,\ldots,\lambda_\ell)$ 
	be the partition $(0^{k_0}, 1^{k_1}, \ldots, (r-1)^{k_{r-1}} )$ \footnote{In this notation, 
	the number of parts of $\lambda$ that is equal to $i$ is $k_i$.}.
	Suppose
	$$
	 \sum_{i=0}^{r-1} k_i= n.
	$$ 
	Then, the length $\ell$ 
	of $\lambda$ equals $n$, i.e. $n=\ell(\lambda)=\ell$.

	Let $x_1,\dots, x_n$ be algebraically independent variables, and let  
	$$
	M(\k)= \{ x_{\sigma (1)}^{\lambda_1} x_{\sigma(2)}^{\lambda_2} \cdots 
	x_{\sigma (n)}^{\lambda_n}:\ \sigma \in S_n,\ \lambda = \lambda_\k \}.
	$$ 
	We denote by $\C M(\k)$ the vector space on $M(\k)$. 
	Then, $\vG \wr S_n$ acts on $\C M(\k)$ by 
	$$
	(g, \sigma ) \cdot f(x_1,\ldots , x_n) := 
	f ( g_1^{-1} x_{\sigma(1)} , \ldots , g_n^{-1} x_{\sigma (n) }),
	$$
	where $g= (g_1,\ldots , g_n) \in \vG^n$, $\sigma \in S_n$, and $f\in \C M(\k)$. 
	It is shown in \cite{ATY} that  $\C M(\k)$ is an irreducible $\vG \wr S_n-$module
	and all irreducible representations of $\vG \wr S_n$ can be obtained this way.

	In \cite{Mizukawa} it is shown that  
	$$ 
	\Ind{\vG \wr S_n}{S_n}{1}= \bigoplus_{\sum k_i = n } \C M (\k ).
	$$


	\begin{Remark}
	Let $(g,\sigma) \in \vG \wr S_n$  and  
	$x_{\tau(1)}^{\lambda_1} \cdots x_{\tau(n)}^{\lambda_n} \in M(\k)$.
	Then, 
	\begin{align*}
	(g,\sigma) \cdot (x_{\tau(1)}^{\lambda_1} \cdots x_{\tau(n)}^{\lambda_n}) &= 
	(g,\sigma) \cdot (x_1^{\lambda_{\tau^{-1}(1)}} \cdots x_{n}^{\lambda_{\tau^{-1}(n)}}) \\
	&=
	\frac{1}{ \prod_i g_{\sigma(i)}^{\lambda_{\tau^{-1}(i)} } }
	x_{\sigma(1)}^{\lambda_{\tau^{-1}(1)}} \cdots x_{\sigma(n)}^{\lambda_{\tau^{-1}(n)} } \\
	&= \frac{1}{ \prod_i g_{\sigma(\tau(i))}^{\lambda_i } }
	x_{\sigma(\tau(1))}^{\lambda_{1}} \cdots x_{\sigma(\tau(n))}^{\lambda_{n} }.
	\end{align*}
	Therefore, the matrix of the representation $\Mk$ at $(g,\sigma) \in \vG \wr S_n$ is 
	a monomial matrix and its unique non-zero diagonal entry is 
	$ 1/ \prod_i g_{\sigma(i)}^{\lambda_i } $.
	It follows that the character  $\chi_{\Mk}$ of $\Mk$ evaluated at $(g, \sigma)$ equals 
	\begin{equation}\label{E:characterofMk}
	 \chi_{\Mk} (g,\sigma) = \frac{1}{  g_{\sigma(1)}^{\lambda_1 } 
	\cdots g_{\sigma(n)}^{\lambda_n}}. 
	\end{equation}
	\end{Remark}

\subsection{Zonal Spherical Functions}

	Let $(G,K)$ be a Gelfand pair. Suppose that 
	$\bigoplus^s_{i=1} V_i$ is the decomposition of $\Ind{G}{K}{1} $ 
	into irreducible subrepresentations. 
	For $1 \leq i \leq s$, let $\chi_i$ denote the character of $V_i$ and define 
	the $\C-$valued function 
	\begin{equation}
	\omega_i (x) =   \frac{1}{| K |} \sum_{h\in K} \chi_i(x^{-1}h),\ \text{for}\, x\in G.
	 \end{equation}
	The functions $\omega_i$ for $i = 1\leq i \leq s$ are called the 
	{\em zonal spherical functions} of the pair $(G,K)$.
	They form an orthogonal basis of $\homself_G \L(G/K)$.

	Let $m_\lambda (x_1,\dots, x_n )$ be the {\em monomial symmetric function}
	defined as
	$$
	m_{\lambda}(x_1,\ldots, x_n) = \frac{1}{k_0 ! k_1 ! \cdots k_n! } 
	\sum_{\sigma \in S_n} x_{\sigma(1)}^{\lambda_1} \cdots x_{\sigma(n)}^{\lambda_n}.
	$$
	In \cite{Mizukawa}, it shown that the zonal spherical function $\omega^\k$ 
	corresponding to an irreducible constituent $\C M(\k)$ of $\Ind{\vG \wr S_n}{S_n}{1}$
	can be expressed  in terms of the monomial symmetric function $m_\lambda$:
	$$
	\omega^{\k} (( h ,\sigma )) = 
	\frac{ m_{\lambda} (h_1,
	\ldots, h_n)}{ m_{\lambda} (1,\ldots, 1)} ,\	 \text{for}\,(h,\sigma) \in \vG \wr S_n,
	$$
	where $\lambda$ is the partition determined by $\k$.

	\begin{Remark}
	It is easy to see that 
	$m_{\lambda}(1,\ldots, 1) = {n \choose k_1 ,\ldots, k_n}$.
	\end{Remark}

	\subsection{Zonal spherical functions of $(\wvG{n} \rtimes S_n, S_n)$.}
	
	In the rest of the article, we will concentrate on 
	the {\em generalized parking function module}
	$$
	\Ind{ \wvG{n} \rtimes S_n}{S_n}{1}.
	$$ 
	We start by a general observation.

	\begin{Fact}\label{F:exercise} (See Section 1.13, Exercise 6.b in \cite{Sagan})
	Let $N$ be a normal subgroup of a finite group $G$, and let $Y$ is a representation of 
	$G/N$. Define a function $X$ on $G$ by $X(g) = Y( g  N)$. Then, 
	$X$ is an irreducible representation of $G$ if and only if $Y$ is an irreducible
	representation of $G/N$. In this case $X$ is said to be ``lifted from $Y$.''
	\end{Fact}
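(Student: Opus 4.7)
The plan is to recognize that the assignment $\pi : G \to G/N$ given by $g \mapsto gN$ is a surjective group homomorphism, and to express $X$ as the composition $X = Y \circ \pi$. Once this is set up, every assertion in the statement becomes a routine consequence of surjectivity of $\pi$.

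First I would verify that $X$ is in fact a representation of $G$, i.e. a homomorphism from $G$ into $GL(V)$ where $V$ is the underlying space of $Y$. Since $Y$ is a homomorphism $G/N \to GL(V)$ and $\pi$ is a homomorphism $G \to G/N$, the composition $X = Y \circ \pi$ is automatically a homomorphism. So $X$ is a well-defined representation of $G$ regardless of irreducibility.

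For the main equivalence, the key observation is that $X(G) = Y(\pi(G)) = Y(G/N)$ since $\pi$ is surjective; that is, $X$ and $Y$ have the same image inside $GL(V)$. A subspace $W \subseteq V$ is stable under $X$ precisely when it is stable under every operator in $X(G) = Y(G/N)$, which is precisely the condition that $W$ be stable under $Y$. Therefore the lattice of $G$-invariant subspaces of $V$ under $X$ coincides with the lattice of $G/N$-invariant subspaces under $Y$, and in particular $X$ is irreducible if and only if $Y$ is irreducible.

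There is no real obstacle here; the statement is essentially a bookkeeping observation about pulling back representations along a surjective homomorphism. The only point that needs a moment of care is making sure one reads the definition $X(g) = Y(gN)$ as literally the composition with $\pi$, so that the image and the invariant-subspace structure transfer directly between $X$ and $Y$.
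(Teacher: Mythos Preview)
Your argument is correct and is the standard proof: writing $X = Y \circ \pi$ with $\pi$ the surjective quotient map, you note that $X(G) = Y(G/N)$ as subsets of $GL(V)$, so the invariant subspaces of $V$ under the two actions coincide, and irreducibility transfers in both directions.

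The paper itself does not prove this statement; it is quoted as a Fact with a reference to Exercise 6.b of Section 1.13 in Sagan's book, and is then applied as a black box. So there is no proof in the paper to compare against, but your proposal is exactly the expected solution to that exercise.
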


	Since $\wvG{n} \rtimes S_n$ is isomorphic to 
	$(\vG \wr S_n) / (\Delta \vG^n \rtimes \{ e_0\} )$, we can use 
	Fact \ref{F:exercise}: Let 
	$\Mk$ be an irreducible representation of $\vG \wr S_n$ for some 
	$\k= (k_0, \ldots, k_{r-1}) \in \Z^{r}_{\geq 0}$ where $\sum k_i =n$.
	The representation $\Mk$ descends to a representation $Y$ of $\wvG{n}$
	if and only if the normal subgroup $\Delta \vG^n \rtimes \{ e_0\} $
	acts trivially on the representation $V$. In which case, the representation $Y$
	is irreducible by the above fact.
	
	
	Let $(c,e_0) \in \Delta \vG^n \rtimes \{ e_0\} $  and  
	$x_{\tau(1)}^{\lambda_1} \cdots x_{\tau(n)}^{\lambda_n} \in M(\k)$, 
	where $\xi=e^{\frac{2\pi i}{r}}$ and
	$c=(\xi^j, \ldots, \xi^j)$ for some $j=0, \ldots, r-1$.
	
	We calculate the effect of $(c,e_0) \in \Delta \vG^n \rtimes \{ e_0\} $  on
	$x_{\tau(1)}^{\lambda_1} \cdots x_{\tau(n)}^{\lambda_n}$:
	
	$$(c,e_0) \cdot 
	x_{\tau(1)}^{\lambda_1} \cdots x_{\tau(n)}^{\lambda_n} = (\xi^{-j\sum \lambda_i})
	x_{\tau(1)}^{\lambda_1} \cdots x_{\tau(n)}^{\lambda_n}
	.$$

	The action of $\Delta \vG^n \rtimes \{ e_0\} $ on $\Mk$ is trivial if and only if
	the right hand side equals $x_{\tau(1)}^{\lambda_1} \cdots x_{\tau(n)}^{\lambda_n}$.
	Given that $\xi$ is a primitive $r^{th}$ root of unity, this is only possible if and only if
	$r$ divides the sum $\sum \lambda_i = \sum ik_i$.
	
	Thus, $\Mk$ is an irreducible representation of $\wvG{n} \rtimes S_n$ if and only if 
	$\k = (k_0,\ldots, k_{r-1})$ satisfies the following two conditions:
	\begin{itemize}
	\item $\sum_i k_i = n$,
	\item $r$ divides $ \sum i k_i$.
	\end{itemize}
	Our main result follows from these observations:

	\begin{Theorem} \label{T:main_theorem}
	Let $\vG= \Z_r$ and 
	$\k = (k_0,\ldots, k_{r-1})$ be a sequence of non-negative integers
	of length $r$. 
	Then, 
	\begin{enumerate}
	\item 
	The $\wvG{n} \rtimes S_n$-representation  $\C M(\k)$ is  irreducible  of 
	of degree $n \choose {k_0, \ldots, k_{r-1}}$ if and only if
	$\sum_i k_i = n$ and $r$ divides $ \sum i k_i$.

	\item The pair $(\wvG{n} \rtimes S_n, S_n)$ is Gelfand, and furthermore, 
	$\Ind{\wvG{n} \rtimes S_n}{S_n}{1}$ decomposes into irreducibles as  follows:
	\begin{align}\label{A:DecompositionCatalan}
	\Ind{\wvG{n} \rtimes S_n}{S_n}{1} = 
	\bigoplus_{\stackrel{\sum k_i = n}{ r \big{|} \sum i k_i } } \C M (\k ).
	\end{align}
	
	\end{enumerate}
	\end{Theorem}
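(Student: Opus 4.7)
The plan is to reduce both parts of the theorem to the decomposition $\mathrm{Ind}^{\vG \wr S_n}_{S_n}(\one) = \bigoplus_{\sum k_i=n} \C M(\k)$ of Mizukawa, combined with the isomorphism $\wvG{n} \rtimes S_n \cong (\vG \wr S_n)/N$ where $N := \Delta\vG^n \rtimes \{e_0\}$ from the lemma above, and the ``lifting'' criterion of Fact \ref{F:exercise}. The key computational input is already present: directly before the theorem it is shown that each $(c,e_0) \in N$ with $c = (\xi^j,\dots,\xi^j)$ acts on \emph{every} basis monomial $x_{\tau(1)}^{\lambda_1}\cdots x_{\tau(n)}^{\lambda_n} \in M(\k)$ by the single scalar $\xi^{-j\sum\lambda_i}$.

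For part (1), I first invoke the result of \cite{ATY} that $\C M(\k)$ is an irreducible $\vG \wr S_n$-module when $\sum k_i = n$, with basis $M(\k)$ whose cardinality is the number of distinct rearrangements of $(\lambda_1,\dots,\lambda_n) = (0^{k_0}, 1^{k_1},\dots,(r-1)^{k_{r-1}})$, i.e.\ $\binom{n}{k_0,\ldots,k_{r-1}}$. The displayed computation shows that $N$ acts on $\C M(\k)$ by the scalar character $(c,e_0) \mapsto \xi^{-j \sum \lambda_i} = \xi^{-j\sum i k_i}$, so this character is trivial (for all $j = 0,\dots,r-1$) if and only if $r \mid \sum i k_i$. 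By Fact \ref{F:exercise}, $\C M(\k)$ descends to an irreducible representation of the quotient $\wvG{n} \rtimes S_n$ in exactly this case, and the dimension is unchanged.

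For part (2), the Gelfand property is already Lemma \ref{L:mainlemma}, so only the explicit decomposition remains. Here I would identify $\mathrm{Ind}^{\wvG{n}\rtimes S_n}_{S_n}(\one)$ with the space of $\C$-valued functions on the coset space $(\wvG{n}\rtimes S_n)/S_n \cong \wvG{n}$, and pull it back via the projection $\pi$ to the space of $N$-invariant functions on $(\vG\wr S_n)/S_n \cong \vG^n$. Since pullback commutes with induction from $S_n$, this exhibits
\begin{equation*}
\mathrm{Ind}^{\wvG{n}\rtimes S_n}_{S_n}(\one) \;\cong\; \bigl( \mathrm{Ind}^{\vG\wr S_n}_{S_n}(\one) \bigr)^{N}
\end{equation*}
as $\vG\wr S_n$-modules, where the right-hand action factors through $\wvG{n}\rtimes S_n$. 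Applying Mizukawa's decomposition and the fact from part (1) that $N$ acts on each $\C M(\k)$ by a single scalar (so that $\C M(\k)^N$ is either all of $\C M(\k)$ or zero) yields
\begin{equation*}
\mathrm{Ind}^{\wvG{n}\rtimes S_n}_{S_n}(\one) = \bigoplus_{\substack{\sum k_i = n \\ r \mid \sum i k_i}} \C M(\k),
\end{equation*}
which is the claimed formula \eqref{A:DecompositionCatalan}.

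The only genuine subtlety is the bookkeeping of the identification $\mathrm{Ind}^{\wvG{n}\rtimes S_n}_{S_n}(\one) \cong (\mathrm{Ind}^{\vG\wr S_n}_{S_n}(\one))^N$; everything else is direct application of the cited results and the eigenvalue calculation already carried out. I expect no serious obstacle, since the normal subgroup $N$ is contained in the base group $\vG^n$ (not interacting with the induction from $S_n$ beyond pointwise translation), so Frobenius reciprocity together with Fact \ref{F:exercise} makes the identification formal.
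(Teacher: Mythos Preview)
Your proposal is correct and follows essentially the same route as the paper: the paper's argument is precisely the discussion preceding the theorem (the scalar action of $N=\Delta\vG^n\rtimes\{e_0\}$ on $\C M(\k)$, the lifting criterion of Fact~\ref{F:exercise}, and Mizukawa's decomposition), after which it simply says ``Our main result follows from these observations.'' Your only addition is to spell out the identification $\mathrm{Ind}^{\wvG{n}\rtimes S_n}_{S_n}(\one)\cong\bigl(\mathrm{Ind}^{\vG\wr S_n}_{S_n}(\one)\bigr)^{N}$ explicitly, which the paper leaves implicit.
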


	\begin{Remark}
	Let $g\in \vG^n$ and $\widetilde{g}$ its image in $\wvG{n}$.
	A simple calculation shows that the double coset $S_n ( \widetilde{g}, e_0 ) S_n $
	in $\wvG{n} \rtimes S_n$ 
	is  $\{ (\sigma (\widetilde{g}), \sigma \tau):\ \sigma, \tau \in S_n \}$.
	Therefore $ \# S_n ( \widetilde{g}, e_0 ) S_n $
	is the cardinality of the $S_n$-orbit $S_n\cdot \widetilde{g}$ in $\wvG{n}$  times
	$n!$.
	Furthermore, the number of double cosets of $S_n$ in $\wvG{n} \rtimes S_n$ is the number 
	of $S_n-$orbits in $\wvG{n}$.
	\end{Remark}

	\begin{Corollary}\label{C:r_to_the_n}
	For any positive integer $n$ and $r$ the following identity holds
	$$ r^{n-1} = 
	\sum {n \choose {k_0, \ldots, k_{r-1}}},
	$$
	where the summation is over all nonnegative integer sequences
	of length $r$ such that 
	${\sum_i k_i = n}$ and $r$ divides $\sum_i i k_i$. 
	\end{Corollary}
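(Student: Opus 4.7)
The plan is to deduce this identity by a direct dimension count applied to the decomposition in Theorem \ref{T:main_theorem}(2). Since the equality of two representations forces equality of their dimensions, the corollary will drop out once we compute the dimension of each side.

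First I would compute the dimension of the left-hand side. The induced representation $\Ind{\wvG{n} \rtimes S_n}{S_n}{1}$ is the permutation representation on the coset space $(\wvG{n} \rtimes S_n)/S_n$, whose dimension is the index $[\wvG{n} \rtimes S_n : S_n]$. Since $\wvG{n} = \vG^n/\Delta\vG^n$ has order $r^n/r = r^{n-1}$ when $\vG = \Z_r$, this index equals $r^{n-1}$.

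Next I would compute the dimension of the right-hand side. By Theorem \ref{T:main_theorem}(1), each irreducible constituent $\C M(\k)$ appearing in the decomposition has dimension $\binom{n}{k_0,\ldots,k_{r-1}}$, and the constituents are indexed precisely by nonnegative integer sequences $\k = (k_0, \ldots, k_{r-1})$ satisfying $\sum_i k_i = n$ and $r \mid \sum_i i k_i$. Summing the dimensions of these irreducibles over all such $\k$ gives the right-hand side of the claimed identity.

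Equating the two dimension counts yields the stated formula. There is no real obstacle here; the whole content is packaged into Theorem \ref{T:main_theorem}, and this corollary is essentially a combinatorial shadow of the representation-theoretic decomposition, analogous to how $n! = \sum_\lambda (f^\lambda)^2$ is a shadow of the decomposition of the regular representation of $S_n$.
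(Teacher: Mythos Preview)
Your proof is correct and matches the paper's intended argument: the corollary is stated immediately after Theorem \ref{T:main_theorem} with no explicit proof, so it is meant to follow by taking dimensions on both sides of the decomposition (\ref{A:DecompositionCatalan}), exactly as you do.
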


	\section{\textbf{A $q-$analogue of the Catalan Numbers.}}
	\label{sec-3}

	Set $r=n+1$ and  $\vG= \Z_{n+1}$. Recall that 
	as an $S_n$-set $\wvG{n}$  is isomorphic to the set of parking functions. Recall
	also that 
	\begin{Fact}\label{F:numberoforbits}
	The number of 
	$S_n$-orbits in an $S_n$-set $X$ is the multiplicity of the trivial representation
	in the associated representation $\L(X)$.
	\end{Fact}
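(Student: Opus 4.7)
The plan is to reduce the statement to two elementary facts: first, that for any complex representation $V$ of a finite group $G$, the multiplicity of the trivial representation in $V$ equals $\dim V^G$, the dimension of the subspace of $G$-invariants; and second, that for the permutation representation $\L(X)$, the $G$-invariant subspace has an obvious basis indexed by the $G$-orbits.

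For the first reduction, I would invoke complete reducibility (Maschke's theorem): decompose $V = \bigoplus_i V_i^{m_i}$ into isotypic components, and observe that by Schur's lemma the only contribution to $V^G$ comes from copies of the trivial representation, giving $\dim V^G = m_{\mathrm{triv}}$. This is already essentially contained in the Schur-lemma discussion earlier in the paper, so I can cite it rather than rederive it.

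For the second step, I would unpack the definition of the $G$-action on $\L(X)$: a function $\psi : X \to \C$ lies in $\L(X)^G$ if and only if $\psi(g^{-1} \cdot x) = \psi(x)$ for all $g \in G$ and $x \in X$, which is to say $\psi$ is constant on each $G$-orbit of $X$. Consequently the indicator functions $\{\one_O : O \in \Orb(X)\}$ of the orbits form a basis of $\L(X)^G$, so $\dim \L(X)^G = \#\Orb(X)$. Combining this with the first step yields the desired equality between the number of $S_n$-orbits in $X$ and the multiplicity of the trivial $S_n$-representation inside $\L(X)$.

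There is no real obstacle here — this is the Burnside/invariants-basis argument and the entire proof fits in a few lines. The only mild subtlety is being careful about the inverse in the definition $(g \cdot \psi)(x) = \psi(g^{-1} \cdot x)$, so that ``$G$-invariant'' really does translate to ``constant on orbits'' rather than something dual; but since $G$ is finite, running over $g$ is the same as running over $g^{-1}$, and the translation is automatic.
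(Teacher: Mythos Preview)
Your proposal is correct and is the standard argument for this well-known fact. Note, however, that the paper does not actually supply a proof of Fact~\ref{F:numberoforbits}: it is simply stated as a \emph{Fact} and immediately invoked in the proof of Proposition~\ref{P:Cats}. (Relatedly, the closely connected Lemma~\ref{L:Trivial_in_PF} is also explicitly ``stated without proof.'') So there is nothing in the paper to compare your argument against; your write-up would in fact fill in what the paper leaves to the reader.
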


	\begin{Proposition}\label{P:Cats}
	Let $\vG = \Z_{n+1}$. Then,
	the number of irreducible representations 
	in $$\Ind{\wvG{n} \rtimes S_n} {S_n}{1}$$ 
	is the n-th Catalan number, $C_n$.
	\end{Proposition}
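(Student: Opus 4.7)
The plan is to exploit the general principle that for a Gelfand pair $(G,K)$ the number of irreducible constituents of $\Ind{G}{K}{1}$ equals $\dim \homself_G \L(G/K) = |K\backslash G/K|$, the number of double cosets; here this quantity was already identified in the Remark following Theorem \ref{T:main_theorem}. Combining this with the $S_n$-set identification of $\wvG{n}$ (for $\vG = \Z_{n+1}$) with parking functions of length $n$, the problem reduces to a well-known orbit count.

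Concretely, I would proceed in the following steps. First, invoke Schur's lemma and the decomposition $\Ind{\wvG{n}\rtimes S_n}{S_n}{1} = \bigoplus_\k \C M(\k)$ from Theorem \ref{T:main_theorem} to observe that, since the induction is multiplicity free, the number of summands is the dimension of the commutative Hecke algebra $\homself_{\wvG{n}\rtimes S_n} \L((\wvG{n}\rtimes S_n)/S_n) \cong \L(S_n \backslash (\wvG{n}\rtimes S_n) / S_n)$. Second, apply the description of double cosets from the Remark after Theorem \ref{T:main_theorem}: the number of $S_n$-double cosets in $\wvG{n}\rtimes S_n$ equals the number of $S_n$-orbits on $\wvG{n}$ itself.

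Third, specialize to $\vG = \Z_{n+1}$. Pollak's bijection (recalled in the introduction), sending a parking function $(a_i)$ to $(a_i - 1 \bmod n+1)$, is $S_n$-equivariant, so $S_n$-orbits on $\wvG{n}$ correspond to $S_n$-orbits on the set of parking functions of length $n$. Fourth, choose the non-decreasing representative in each orbit (as in the running example for $n=3$): the orbits are thus in bijection with non-decreasing sequences $1 \leq b_1 \leq \cdots \leq b_n$ satisfying $b_i \leq i$ for all $i$. This final set is counted by $C_n = \frac{1}{n+1}\binom{2n}{n}$, as noted in Section \ref{sec-1}.

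No step of this chain is a serious obstacle since each link is either asserted in the paper above or is a standard reference; the only thing to be careful about is making sure the chosen orbit representatives really are canonical non-decreasing parking functions, but this is immediate because rearranging any parking function into non-decreasing order produces a (unique) non-decreasing parking function in the same $S_n$-orbit. As a sanity check, one can alternatively verify Proposition \ref{P:Cats} directly from Theorem \ref{T:main_theorem} by showing bijectively that the set $D(n) = \{(k_0,\ldots,k_n) : \sum k_i = n,\ (n+1) \mid \sum i k_i\}$ has cardinality $C_n$; this bijective verification is not needed for the proof but it is consistent with $C_n(1) = \frac{1}{n+1}\binom{2n}{n}$ already stated in the introduction.
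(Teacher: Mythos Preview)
Your proof is correct and follows essentially the same path as the paper: reduce the count of irreducible constituents to the number of $S_n$-orbits on $\wvG{n}$, identify $\wvG{n}$ with parking functions, and invoke the known count of non-decreasing parking functions by $C_n$. The only cosmetic difference is that you reach the orbit count via the Hecke-algebra/double-coset description (the Remark after Theorem~\ref{T:main_theorem}), whereas the paper's one-line proof invokes Fact~\ref{F:numberoforbits} (multiplicity of the trivial $S_n$-representation equals the number of $S_n$-orbits), together with the Gelfand-pair property that each irreducible summand contributes exactly one $S_n$-fixed vector; these are two equivalent phrasings of the same standard fact.
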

	\begin{proof}
	Immediate from the Fact \ref{F:numberoforbits}.
	\end{proof}

	Let $\bigoplus_\alpha V_\alpha$ be the decomposition of 
	$\Ind{\wvG{n} \rtimes S_n}{S_n}{1}$ as in (\ref{A:DecompositionCatalan}) where
	$r= n+1$. 
	Define 
	\begin{equation}\label{E:qcat}
	C_n(q) = \sum_\alpha  q^{\dim V_\alpha}.
	\end{equation}
	Since the multiplicity of each irreducible representation 
	in $\Ind{\wvG{n} \rtimes S_n}{S_n}{1}$ is 1, 
	the coefficients of the polynomial $C_n(q)$
	count the number of occurrences of the irreducible representations of a given dimension
	in $\Ind{\wvG{n} \rtimes S_n}{S_n}{1}$.

	Let $D(n)$ denote the set of all $n+1$ tuples $\k = (k_0,\dots, k_n) \in \N^{n+1}$ 
	such that $\sum_{i=0}^n k_i = n$ and $n+1$ divides $\sum_{i=1}^{n} i k_i $. 
	It follows from Theorem \ref{T:main_theorem} that 
	\begin{align}\label{A:cats_multinomials}
	C_n(q) = \sum_{ \k \in D(n)} q^{n \choose k_0,k_1,\ldots, k_n}.
	\end{align}
	It follows from Proposition \ref{P:Cats} that 
	\begin{align}\label{A:catat1}
	C_n (1) = C_n= \frac{1}{n+1} {2n \choose n},
	\end{align}
	and that
	\begin{align}\label{A:parkat1}
	\frac{d C_n(q)}{dq} \Bigg|_{q=1} = (n+1)^{n-1}.
	\end{align}

		\begin{Example}
		 There are 5 sequences of the form $(k_0,\dots, k_3) \in \N^4$
		 such that $k_0 + \ldots + k_3 = 3$ and $\sum_{i=1}^3 i k_i $ is divisible by 4. Namely, 
		 $D(3) = \{ \bf{ 3000, 1101, 0210,1020, 0012} \}.$	 
		 Then 
		 $
		 C_3(q) = q + 3 q ^3 + q^6.
		 $ 
		 Similarly, one can compute 
		 $
		 C_4 (q) = q + 4 q^4 + 2 q^6 + 6 q^{12} + q^{24}.
		 $
		\end{Example}

	Let $E(n)$ be the set of $n$ element multisets on $\Z_{n+1}$ 
	whose elements sum to 0. 
	It is known that the cardinality $E(n)$ 
	is equal to the $n$-th Catalan number. See \textbf{jjj} in 
	page 264 of \cite{Stanley2}.   
	Recently Tewodros Amdeberhan constructed an explicit 
	bijection between the sets $D(n)$ and $E(n)$.

	Set $C_n(q) = \alpha_1(n) q^{a_1}+  \alpha_2(n) q^{a_2} + \cdots + \alpha_m(n) q^{a_m}$,  
	where $\alpha_i(n) \neq 0$ for $i=1,\ldots , m$ and $1\leq a_1 \leq a_2 \leq \cdots \leq a_m$. 
	It is desirable to find a combinatorial interpretation of the coefficients $\alpha_i (n)$, 
	$1\leq i \leq m$.

	Let $t_0 < t_1 < t_2 < \cdots $ be a collection of ordered, commuting variables.
	Let $\sigma$ be a plane tree (connected graph with no cycles) on $n$ vertices. 
	Set
	$$
	t^\sigma = \prod_{i \geq 0} t_i^{d_i(\sigma)},
	$$
	where $d_i(\sigma)$ is the number of vertices of $\sigma$ of degree $i$. 
	Define 
	$$
	s_n = \sum_{\tau} t^\tau,
	$$ 
	summed over all plane trees with $n$ vertices. For example, 
	$s_4 = t_0^3 t_3 + 3 t_0^2 t_1 t_2 + t_0 t_1^3$. 
	
	Using lexicographic ordering on the monomials of $s_n$, 
	we denote by $v_n$ the sequence of coefficients of the polynomial $s_n$.
	For example, $v_4 = (1,3,1)$.
	It turns out, for $n\leq 6$ the sequence $v_{n+1}$ agrees with 
	the sequence $( \alpha_1(n), \cdots, \alpha_m(n))$ of coefficients of $C_n(q)$.
	However, 
	$$
	(\alpha_i (7) )_{i=1}^{14} = (1, 7, 7, 7, 21, 42, 21, 56, 105, 35, 35, 70, 21, 1),
	$$
	and $v_8 = (1, 7, 7, 7, 21, 42, 21, 21,35,105, 35, 35, 70, 21, 1).$
	\begin{Remark}
	Coefficient vectors $v_n,\ n\geq 1$ can be computed by the Lagrange inversion
	formula (see page 40 of \cite{Stanley2}). 
	\end{Remark}
	


	\section{ \textbf{Final Remarks and Questions}}
	\label{sec-final}


	Let $\k = (k_0,\dots, k_n)$ be an element of $D(n)$ 
	and let $\C M(\k)$ be the associated irreducible constituent
	of $\Ind{\wvG{n} \rtimes S_n}{S_n}{1}$.
	Let $\omega_{PF}^{\k}$ be  its the zonal spherical function.

	Given $(h,\sigma) \in (\vG^{n} \wr S_n, S_n)$, by $\widetilde{h}$ denote
	 the image of $h$ in $\wvG{n}$. 
	Then,
	$$
	\omega_{PF}^{\k} ( \widetilde{ h} ,\sigma) = 
	\frac{ m_{\lambda} (h_1,
	\ldots, h_n)}{ m_{\lambda} (1,\ldots, 1)} ,
	$$
	where $\lambda$ is the partition determined by $\k \in D(n)$.

	The values of the zonal spherical functions for $\wvG{n} \rtimes S_n$ do not need 
	to be real. For example, here is the picture of the 
	image of the zonal spherical function 
	$\omega^{\k} :\wvG{5} \rtimes S_5 \rightarrow \C$
	for $\k = (0,0,0,2,3) $:

		\begin{figure}[ht]
		\begin{center}
		\includegraphics[scale = .3]{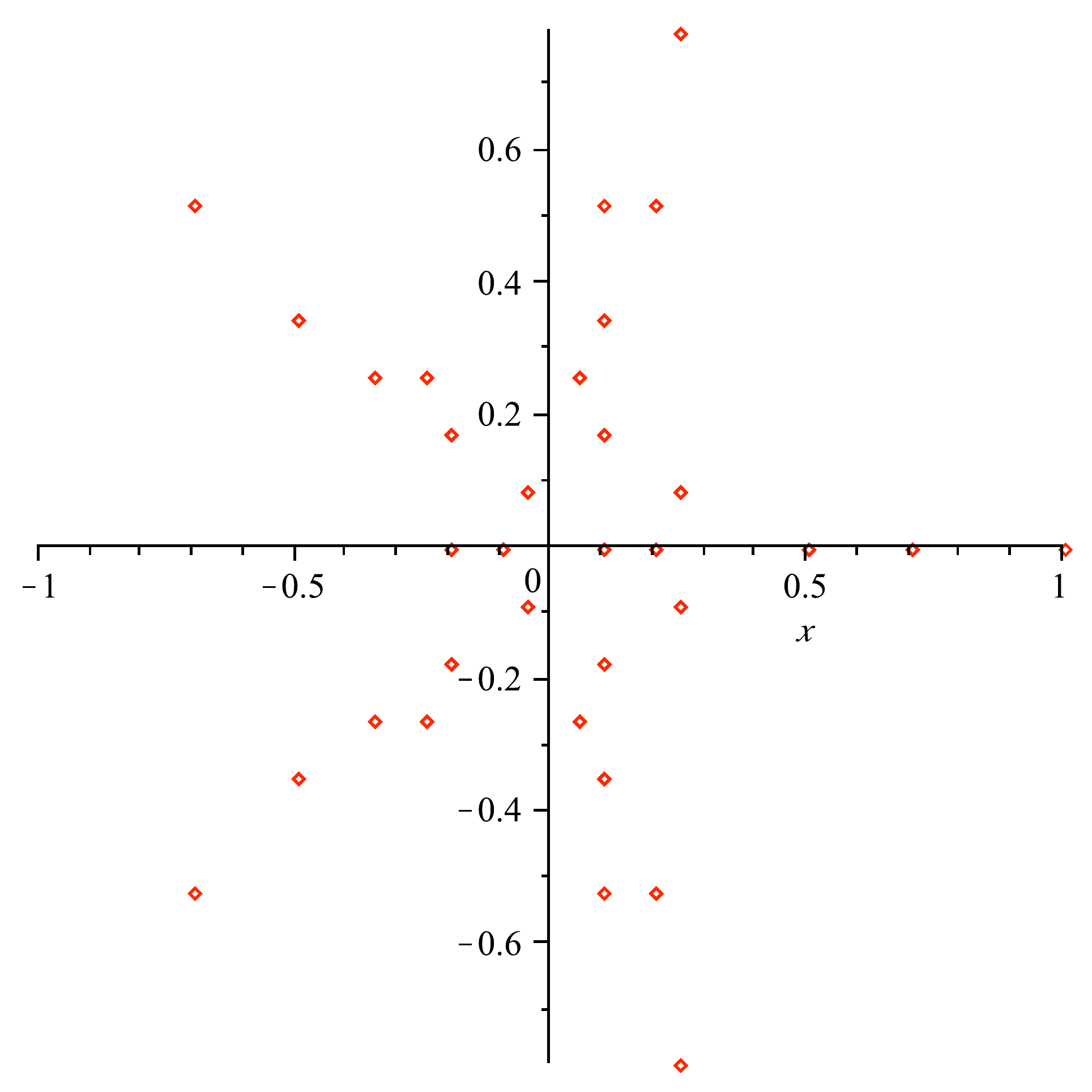}
		\caption{}
		\label{G:graphofsigma}
		\end{center}
		\end{figure}

	What about the number of real valued zonal spherical functions?
	A few terms of in this sequence are $2,3,6,10,\ldots$ (starting at $n \geq 2)$.
	Can we say anything about the injectivity of these functions? 
	How about their Fourier transforms?

	Let $\{ x_1,y_1,\dots, x_n,y_n \}$ be 
	a set of $2n$ algebraically independent variables on which $\sigma \in S_n$ acts by 
	$$
	\sigma \cdot x_i = x_{\sigma^{-1}(i)},\ 
	\sigma \cdot y_i = y_{\sigma^{-1}(i)}\ \text{for}\ i=1,\dots,n.
	$$
	For non-negative integers $r,s \in \N$, define 
	$
	p_{r,s} = \sum_{i=1}^n x_i ^r y_i ^s.
	$
	The ring of diagonal co-invariants (of $S_n$) is the 
	$S_n$-module 
	$$
	R_n = \C [ x_1,y_1,\ldots, x_n,y_n] / I_+,
	$$ 
	where $I_+$ is the 
	ideal generated by the power sums 
	$p_{r,s}$ with $r+s > 0$.

	Let $\vG = \Z_{n+1}$. 
	It is known that $R_n$ is isomorphic as an
	$S_n$-module to 
	$(\vG^n / \Delta \vG^n) \otimes sign$  (see \cite{Haiman}).
	Notice that $R_n$ is a bi-graded $S_n$-module.  
	Let $\mathcal{F}(q,t)$ be the corresponding bi-graded character.
	The multiplicity of the sign character in $\mathcal{F}(q,t)$ is 
	the so called {\em $q,t-$Catalan series}.
	What is the relationship
	between the $q,t-$Catalan series and our $q$ analogue? 
	It would be interesting to investigate similar questions for the associated 
	``twisted Gelfand pair,'' also.  (For definition, see \cite{Mac}, page 398).

	\subsection{Acknowledgements} We thank Tewodros Amdeberhan 
	for helpful discussions on the Catalan items.

\end{document}